\newcommand{\ddate}{February 25 2013}
\newtheorem{dummy}{anything}[section]
\newtheorem{Theorem}[dummy]{Theorem}
\newtheorem{Lemma}[dummy]{Lemma}
\newtheorem{Proposition}[dummy]{Proposition}
\newtheorem{Remark}[dummy]{Remark}
\newtheorem*{mainth}{Main theorem}
\newcommand{\bbr}{{\mathbb R}}
\newcommand{\bbc}{{\mathbb C}}
\newcommand{\bbz}{{\mathbb Z}}
\newcommand{\calc}{{\mathcal C}}
\newcommand{\cald}{{\mathcal D}}
\newcommand{\calg}{{\mathcal G}}
\newcommand{\calj}{{\mathcal J}}
\newcommand{\caln}{{\mathcal N}}
\newcommand{\calt}{{\mathcal T}}
\newcommand{\calu}{{\mathcal U}}
\newcommand{\mancqfd}{\hfill \ensuremath{\Box}}
\newcommand{\pcirc}{\kern .7pt {\scriptstyle \circ} \kern 1pt}
\newcommand{\mun}{{-1}}
\newcommand{\dtc}{\dot{\hskip -.12em\tilde c}}
\newcommand{\scr}{\scriptscriptstyle}
\renewcommand{\:}{\colon}
\newcommand{\sk}[1]{\vskip #1 mm}
\newcommand{\eqref}[1]{(\ref{#1})}
\newcommand{\hfl}[2]{\smash{\mathop{\hbox to 1 truecm{\kern %
3pt\rightarrowfill\kern 3pt}}%
\limits^{\scriptstyle#1}_{\scriptstyle#2}}}
\newcommand{\cqfd}{\unskip\kern 6pt\penalty 500%
\raise -2pt\hbox{\vrule\vbox to10pt{\hrule width %
4pt\vfill\hrule}\vrule}\smallskip}
\newcommand{\proref}[1]{Proposition~\ref{#1}}
\newcommand{\remref}[1]{Remark~\ref{#1}}
\newcommand{\lemref}[1]{Lemma~\ref{#1}}
\newcommand{\thref}[1]{Theorem~\ref{#1}}
\newcommand{\secref}[1]{Section~\ref{#1}}
\newcommand{\dfn}[1]{{\it #1}}
\newcommand{\ga}{\Gamma}
\newcommand{\eqncount}{\setcounter{equation}{\value{dummy}}%
\addtocounter{dummy}{1}}
\newcommand{\beq}[1]{\eqncount\begin{equation}\label{#1}}
\newcommand{\eeq}{\end{equation}}
\newcommand{\dia}[1]{\begin{array}{c}{\xymatrix@C-3pt@M+2pt@R-4pt{#1 }}\end{array}}
\newcommand{\tri}{{\rm Tri}}
\newcommand{\ftri}{{\rm Tri}_{fl}}
\newcommand{\bdg}{{\rm bdg}}
\title{Triangles on planar Jordan  $C^1$-curves}
\author{Jean-Claude HAUSMANN}
\date{\ddate}
\begin{document}
\maketitle %\tableofcontents

\begin{abstract}
We prove that a Jordan $\calc^1$-curve in the plane contains the vertices of
any non-flat triangle, up to translation and homothety with positive ratio. This is false if the curve
is not $C^1$. The proof uses a bit configuration spaces, differential and algebraic topology as well as the 
smooth Schoenflies theorem. A partial generalization holds true in higher dimensions.
\end{abstract}

The aim of this note is to prove the main theorem below, in which the following standard definitions
are used. A \dfn{Jordan $\calc^1$-curve $\ga$} is a 
connected closed $\calc^1$-submanifold of the plane. Equivalently, $\ga$ 
is the image of an injective $\calc^1$-immersion of the unit circle $S^1$ into the plane.
A triangle is \dfn{flat} if it is contained in a straight line.

\begin{mainth}
Let $\ga$ be a Jordan $\calc^1$-curve and let $T$ be a non-flat triangle in the plane.
Then, by a translation and a homothety with positive ratio, $T$ may be transformed into a triangle whose
vertices lie on $\Gamma$. 
\end{mainth}

%\iffalse

\psfrag{A}{$\Gamma$}
\psfrag{T}{$T$}
\psfrag{S}{$T'$}
\begin{minipage}{6cm}
\hskip 25mm
\scalebox{0.5}{
\includegraphics{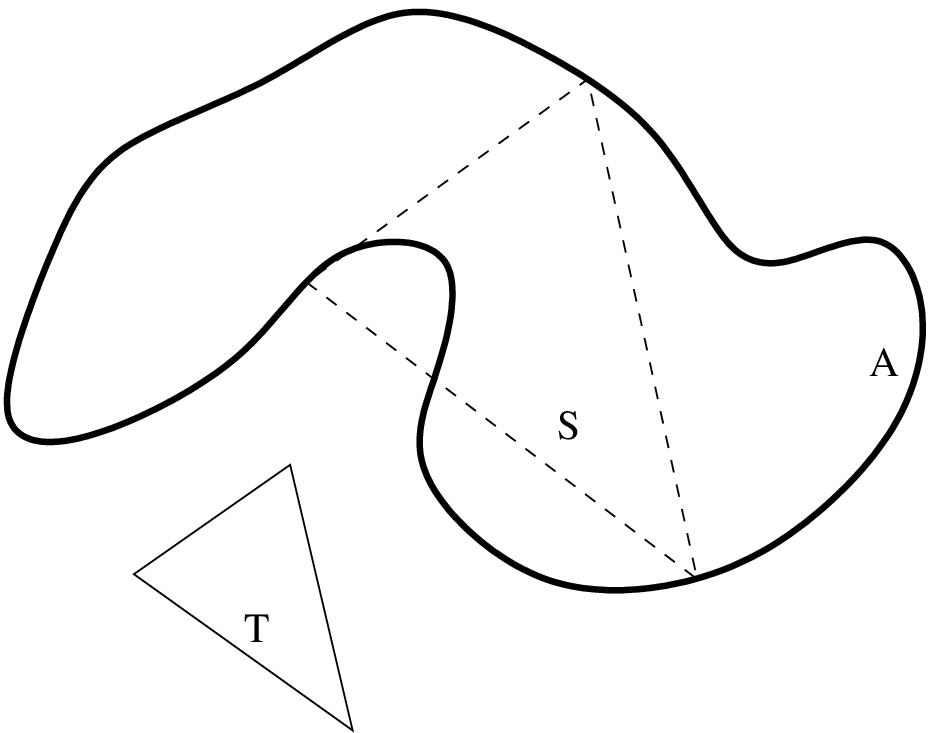}
}
\end{minipage}

\sk{-3}
\noindent
\begin{minipage}{9cm}
The main theorem is wrong if the curve $\ga$ is not of class $\calc^1$, as seen by the example
of a half-lemniscate, with parametrization in polar coordinates given by $r(\theta)=\cos 2\theta$ 
($|\theta|\leq\pi/4$). Let $T$ be an isosceles triangle with a vertical basis $AB$ and the third vertex $C$ on the left of $AB$.
If $T$ is %on~$\Gamma$,
%$\ga$, then $C=0$ by symmetry (since $\ga$ and $T$ are invariant 
% under the reflection through the horizontal axis).
% If the angle at $C$ is obtuse, this is impossible since $\ga$ lies in in the polar domain $|\theta|\leq\pi/4$.
% Note that $\ga$ is of class $\calc^\infty$ except at $0$, where the two tangents form a right angle.   
\end{minipage}
\begin{minipage}{6cm}
%\begin{center}
\sk{19}\hskip 2mm
\setlength{\unitlength}{.2mm}
%\scalebox{1.6}{
\begin{pspicture}(0,-2.1)(0,0)
\scalebox{0.9}{
\psline[linewidth=0.5pt,linestyle=dotted](-0.3,0)(3.5,0)
\psarc[linewidth=0.5pt]{<->}(3.0,0){0.4}{-30}{30}
\put(2.5,0.9){\small $\Gamma$}
\put(1.23,0.2){\small $T$}
\psline[linewidth=0.5pt,linestyle=dashed](1.19,0.76)(1.19,-0.76)(0.77,0)(1.19,0.76)
\scalebox{3}[3.3]{
\psline[linewidth=0.5pt]%
(0.000563312,-0.000562864)(0.025674,-0.0247738)(0.0515901,-0.0480719)(0.0782552,-0.0704087)%
(0.105611,-0.0917388)(0.133596,-0.11202)(0.162148,-0.131212)(0.191201,-0.149279)(0.220691,-0.166189)%
(0.250548,-0.181912)(0.280704,-0.196422)(0.311088,-0.209696)(0.34163,-0.221715)(0.372257,-0.232466)%
(0.402897,-0.241935)(0.433477,-0.250115)(0.463924,-0.257002)(0.494166,-0.262596)(0.524129,-0.266899)%
(0.553741,-0.26992)(0.582931,-0.271668)(0.611627,-0.272158)(0.639761,-0.271409)(0.667263,-0.269441)%
(0.694067,-0.266279)(0.720106,-0.261953)(0.745318,-0.256493)(0.769639,-0.249935)(0.79301,-0.242317)%
(0.815373,-0.23368)(0.836674,-0.224067)(0.85686,-0.213526)(0.87588,-0.202107)(0.893687,-0.18986)%
(0.910238,-0.17684)(0.925491,-0.163105)(0.939408,-0.148711)(0.951955,-0.13372)(0.9631,-0.118193)%
(0.972817,-0.102195)(0.981079,-0.0857896)(0.987868,-0.0690433)(0.993166,-0.0520232)(0.996959,-0.0347969)%
(0.999239,-0.0174329)(1,0)%
(0.999239,0.0174329)(0.996959,0.0347969)(0.993166,0.0520232)(0.987868,0.0690433)(0.981079,0.0857896)%
(0.972817,0.102195)(0.9631,0.118193)(0.951955,0.13372)(0.939408,0.148711)(0.925491,0.163105)(0.910238,0.17684)%
(0.893687,0.18986)(0.87588,0.202107)(0.85686,0.213526)(0.836674,0.224067)(0.815373,0.23368)(0.79301,0.242317)%
(0.769639,0.249935)(0.745318,0.256493)(0.720106,0.261953)(0.694067,0.266279)(0.667263,0.269441)(0.639761,0.271409)%
(0.611627,0.272158)(0.582931,0.271668)(0.553741,0.26992)(0.524129,0.266899)(0.494166,0.262596)(0.463924,0.257002)%
(0.433477,0.250115)(0.402897,0.241935)(0.372257,0.232466)(0.34163,0.221715)(0.311088,0.209696)(0.280704,0.196422)%
(0.250548,0.181912)(0.220691,0.166189)(0.191201,0.149279)(0.162148,0.131212)(0.133596,0.11202)(0.105611,0.0917388)%
(0.0782552,0.0704087)(0.0515901,0.0480719)(0.025674,0.0247738)(0.000563312,0.000562864)
}}
\end{pspicture}
%}
\end{minipage}
\sk{-9}\noindent
on $\ga$, then $C=0$ by symmetry (since $\ga$ and $T$ are invariant 
under the reflection through the horizontal axis).
If the angle at $C$ is obtuse, this is impossible since $\ga$ lies in in the polar domain $|\theta|\leq\pi/4$.
Note that $\ga$ is of class $\calc^\infty$ except at $0$, where the two tangents form a right angle.   

However, a Jordan $\calc^0$-curve contains any non-flat triangle up to similarity. 
This result was established in \cite{Me} with an elementary proof
(see also \cite[Section~11, Theorem~1.3]{KW}).

Compared to the first version of this paper, a new section (\secref{S.gener}) has been added,
containing a generalization of the main theorem for $n$-simplexes on Jordan spheres in $\bbr^n$
($n\neq 4$). The proof just requires slight adaptations.
I am grateful to Michelle Bucher-Karlsson for making me aware of such an extension.

%\fi

The proof of the main theorem is given in \secref{S.pfmainth} while the previous ones 
are devoted to preliminary material. A version of the proof was written by V\'eronique Gonoyan 
in her master thesis (University of Geneva, 2003). Discussions with Anton Alekseev were useful.

\section{The space of triangles}\label{S.sptri}

Identifying the plane with $\bbc$, the \dfn{space of triangles} (not reduced to a single point) $\tri^0$ is the smooth manifold
$$
\tri^0=\{(z_0,z_1,z_2))\in\bbc^3-\Delta \} \, ,
$$ 
where $\Delta=\{(z,z,z)\}$ is the diagonal subset of $\bbc^3$. 
Let $\calg_1\approx \bbc$ be the group of translations of $\bbc$. The diagonal $\calg_1$-action on  $\bbc^3$ preserves
$\Delta$ and is free and proper. Hence, $\tri^1=\tri^0/\calg_1$ inherits a structure of a smooth manifold and 
the correspondence $(z_0,z_1,z_2)\mapsto (z_1-z_0,z_2-z_0)$ induces a diffeomorphism
$$
\tri^1 \approx \bbc^2-\{(0,0)\} \, .
$$
Let $\calg_2$ be the group of homothety of $\bbc$ with a positive ratio (isomorphic to the multiplicative group $\bbr_{>0}$).
Again, 
$$
\tri = \tri^1/\calg_2
$$
is a smooth manifold and one has a diffeomorphism
$$
\tri \approx \big(\bbc^2-\{(0,0)\}\big)\Big/\bbr_{>0} \approx S^3 
$$
from $\tri$ to the standard sphere $S^3$. 

Finally, the group $\calg_3\approx S^1$ of rotations of $\bbc$ acts on $\tri^1$ and $\tri$
and one has diffeomorphisms
$$
\tri\big/\calg_3 \approx S^3/S^1 \approx \bbc P^1 \approx \hat\bbc \, ,
$$ 
where $\hat\bbc=\bbc\cup\{\infty\}$. The following diagram
$$
\dia{
\tri^1 \ar[d]^\approx  \ar@{>>}[r]  &  \tri \ar@{>>}[r] & \tri\big/\calg_3 \ar[d]^\approx \\
%%%ROW2
\bbc^2-\{(0,0)\} \ar@{>>}[rr]^{\beta}  && \hat\bbc
}
$$
is commutative, where $\beta$ is the classical Hopf map
$$
\beta(z_1,z_2) =
\left\{ 
\begin{array}{lll}
{\displaystyle \frac{z_1}{z_2}} & \hbox{if $z_2\neq 0$} \\[2mm]
\infty &  \hbox{otherwise.}
\end{array}\right.
$$

Let $\ftri^0$ be the subspace of $\tri^0$ formed by those triangles which are \dfn{flat}, namely contained in a line.
Denote its images in $\tri^1$ (respectively: in $\tri$) by $\ftri^1$ and  (respectively: $\ftri$). 
The space $\ftri^1$ in $\tri^1\approx \bbc^2-\{(0,0)\}$ is formed by the couples $(z_1,z_2)$ 
of complex numbers which are $\bbr$-linearly dependent. By the above definition of $\beta$, 
the image of $\ftri$ in $\tri/\calg_3\approx\hat\bbc$ is equal to $\hat\bbr=\bbr\cup\{\infty\}$. 
As $\beta$ is a circle bundle, the surface $\calt=\beta^\mun(\hat\bbr)$ is a circle bundle over  $\hat\bbr\approx S^1$.
Since $\tau$ separates $S^3$, it is orientable and thus diffeomorphic to a $2$-torus. This proves the following proposition.

\begin{Proposition}\label{P.tritrifl}
There is a diffeomorphism of manifold pairs
$$
(\tri,\ftri) \approx (S^3,\calt)
$$ 
where $\calt$ is a $2$-torus in $S^3$, separating $S^3$ into two components. \mancqfd
\end{Proposition}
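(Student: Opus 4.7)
The plan is to assemble the identifications already made in the preceding paragraphs. Under the diffeomorphism $\tri \approx S^3$, combined with the Hopf map $\beta \colon S^3 \to \hat\bbc \approx S^2$, the text establishes that $\ftri = \beta^{-1}(\hat\bbr)$, since a pair $(z_1,z_2) \in \bbc^2 - \{(0,0)\}$ represents $\bbr$-linearly dependent vectors precisely when the ratio $z_1/z_2$ lies in $\hat\bbr = \bbr \cup \{\infty\}$.

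First I would note that $\hat\bbr$ is a smoothly embedded circle in $\hat\bbc \approx S^2$, whose complement consists of two disjoint open disks $H_+$ and $H_-$ (the open upper and lower half-planes in $\bbc$). Since $\beta$ is a smooth $S^1$-bundle over $S^2$, the preimage $\calt := \beta^{-1}(\hat\bbr)$ is a smooth, compact, codimension-one submanifold of $S^3$, and is itself a circle bundle over $\hat\bbr \approx S^1$. Up to diffeomorphism there are only two such bundles, the $2$-torus and the Klein bottle, so it remains to rule out the latter.

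Next I would show that $\calt$ separates $S^3$ into exactly two pieces. One has $S^3 \setminus \calt = \beta^{-1}(H_+) \sqcup \beta^{-1}(H_-)$, and each $\beta^{-1}(H_\pm)$ is open and connected, being the total space of an $S^1$-bundle over the connected (in fact contractible) disk $H_\pm$. Hence $S^3 \setminus \calt$ has exactly two connected components, as required.

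Finally, the separation implies that $\calt$ is two-sided in the orientable manifold $S^3$, hence $\calt$ is itself orientable. Since the Klein bottle is non-orientable, we conclude $\calt \approx T^2$. The only step with any subtlety is the two-sidedness conclusion, but this is the standard fact that a globally separating codimension-one submanifold of an orientable manifold has trivial normal bundle, since one can consistently choose a normal direction pointing into a prescribed component of the complement.
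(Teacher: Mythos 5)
Your argument is correct and follows the same route the paper takes: identify $\ftri$ as $\beta^{-1}(\hat\bbr)$ under the Hopf fibration, recognize it as a circle bundle over $S^1$, and use the fact that it separates $S^3$ to conclude orientability and hence the torus. You helpfully spell out two points the paper leaves implicit — that the complement has exactly two components (each $\beta^{-1}(H_\pm)$ being a bundle over a disk, hence connected) and that global separation forces two-sidedness — but the underlying strategy is the same.
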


\begin{Remark}\rm The reader may easily check the following facts about the images in $\tri/\calg_2\approx\hat\bbc$
of the following subsets of $\tri^0$.
\begin{itemize}
\item the flats triangles $(z_0,z_1,z_2)$ with two identical vertices have image equal to $1$ (if $z_1=z_2$), 
$0$ (if $z_0=z_1$) and $\infty$ (if $z_0=z_2$).
\item The equilateral triangles have image $e^{\pm i\pi/3}$.
\item The isosceles triangles have image the circles of equation $|z|=1$, $|z-1|=1$ and the vertical line through $1/2$
(union $\{\infty\}$).
\item The rectangles triangles have image the circle $|z-1/2|=1/2$ and the two vertical lines (union $\{\infty\}$)
through $0$ and $1$.
\end{itemize}
\end{Remark}

%%%%%%%%%%%%%%%%%%%%%%%%%%%%%%PICTURE

\sk{45}

\hskip 40mm
\begin{minipage}{6cm}
%\begin{center}
\hskip 10mm
\setlength{\unitlength}{.3mm}
\scalebox{1.0}{
\begin{pspicture}(0,-2.1)(0,0)
%\grille

\psline[linewidth=2pt](-4,0)(5,0)
\psline[linestyle=dotted,linewidth=1pt](0,-3.5)(0,3.5)
\psline[linestyle=dotted,linewidth=1pt](2,-3.5)(2,3.5)
\put(1,0){\pscircle[linestyle=dotted]{1}}
\put(0,0){\pscircle[linestyle=dashed]{2}}
\put(2,0){\pscircle[linestyle=dashed]{2}}
\psline[linestyle=dashed,linewidth=1pt](1,-4)(1,3.9)
\put(1,1.732){\pscircle*[linestyle=dashed]{0.12}} 
\put(1,-1.732){\pscircle*[linestyle=dashed]{0.12}}

\pspolygon[linewidth=1pt](1.1,2.1)(1.5,2.1)(1.3,2.38)
\pspolygon[linewidth=1pt](1.1,-2.1)(1.5,-2.1)(1.3,-2.38)

\put(0,0){\pscircle*[linecolor=black]{0.1}} 
\put(2,0){\pscircle*[linecolor=black]{0.1}}

\uput{0mm}[90](-0.2,0.2){$0$}
\uput{0mm}[90](1.8,0.2){$1$}
\uput{0mm}[90](-1.73,0.1){$-1$}

\psline[linewidth=1pt](-0.3,-0.2)(-0.7,-0.2)
\put(-0.3,-0.2){\pscircle*[linecolor=black]{0.06}} 
\put(-0.4,-0.55){{\small $C$}}
\put(-0.7,-0.22){\pscircle*[linecolor=black]{0.06}} 
\put(-1.1,-0.55){{\small $AB$}}
\put(-0.7,-0.18){\pscircle*[linecolor=black]{0.06}} 

\psline[linewidth=1pt](2.3,-0.2)(2.7,-0.2)
\put(2.3,-0.2){\pscircle*[linecolor=black]{0.06}} 
\put(2.7,-0.22){\pscircle*[linecolor=black]{0.06}} 
\put(2.7,-0.18){\pscircle*[linecolor=black]{0.06}} 
\put(2.15,-0.55){{\small $A$}}
\put(2.6,-0.55){{\small $BC$}}

\psline[linewidth=1pt](-2.2,-0.2)(-3,-0.2)
\put(-2.2,-0.2){\pscircle*[linecolor=black]{0.06}} 
\put(-2.6,-0.2){\pscircle*[linecolor=black]{0.06}} 
\put(-3,-0.2){\pscircle*[linecolor=black]{0.06}} 
\put(-2.74,-0.55){{\small $A$}}
\put(-2.32,-0.55){{\small $B$}}
\put(-3.15,-0.55){{\small $C$}}

\put(4.3,-0.3){$\infty =$}
\psline[linewidth=1pt](5.2,-0.2)(5.6,-0.2)
\put(5.2,-0.22){\pscircle*[linecolor=black]{0.06}} 
\put(5.2,-0.18){\pscircle*[linecolor=black]{0.06}}
\put(5.6,-0.2){\pscircle*[linecolor=black]{0.06}} 
\put(4.9,-0.55){{\small $AC$}}
\put(5.55,-0.55){{\small $B$}}

\psline[linestyle=dashed,linewidth=1pt](-4.5,3.3)(-3.8,3.3)
\put(-3.5,3.2){{isosceles triangles}}
\psline[linestyle=dotted,linewidth=1pt](-4.5,2.6)(-3.8,2.6)
\put(-3.5,2.5){{rectangle triangles}}

\put(-3.5,0.16){${\rm Tri}_{fl}$}

\end{pspicture}
}
\end{minipage}

\sk{30}
By a Moebius transformation $h$ of $\bar\bbr^3$, one can move $\bar\bbc$ onto the unit sphere~$S^2$, so that
the equilateral triangles are the poles and $\ftri$ is the equator. The locus of isosceles triangles is
then formed by three meridians, dividing $S^2$ into six equal sectors (since $h$ preserves the angles). 
Such pictures are used in statistical shape analysis (see e.g. \cite[p.~37]{DM}).

%%%%%%%%%%%%%%%%%%%%%%%%%%%%%%%%%%%%%%%%%%%%%%%%%%
\section{The main map}\label{S.mainmap}

\subsection{Definitions}
Let $c\:S^1\to\bbc$ be a $\calc^1$-embedding (injective $\calc^1$-immersion), parametrizing a Jordan curve $\ga$
and let $\tilde c\:\bbr\to\bbc$ be defined by $\tilde c(t)=c(e^{2i\pi t})$. That $c$ is an immersion is
equivalent to $\dtc(t)\neq 0$ for all $t\in\bbr$.
   
Let $V=(S^1)^3-\Delta$ and define the map $F^0\:V\to \tri^0$ by 
$$
F^0(s_0,s_1,s_2) = (c(s_0),c(s_1),c(s_2)) \, .
$$
Taking the images in $\tri^1$ and $\tri$ provides maps
$$
F^1\:V\to \tri^1 \quad , \quad F^1(s_0,s_1,s_2)=(c(s_1)-c(s_0),c(s_2)-c(s_0)) 
$$
and
\beq{E.dfc}
F\:V\to \tri \, .
\eeq
One can pre-compose the above maps with $\exp\:\bbr^3\to (S^1)^3$, the universal covering defined by
$\exp(t_0,t_1,t_2)=(e^{2i\pi t_0},e^{2i\pi t_1},e^{2i\pi t_2})$. This provides maps $F^1_e\:W\to \tri^1$
and $F_e\:W\to \tri$, where
$W=\bbr^3-\tilde\Delta$ with
$$
\tilde\Delta=\exp^\mun(\Delta)=\{(t+p,t+q,t+r)\in\bbr^3 \mid  t\in\bbr \hbox{ and } (p,q,r)\in\bbz^3) \} \, .
$$
All these maps are of class $\calc^1$ and the maps $F^0_e$, $F^1_e$ and $F_e$ are invariant under the 
$\bbz^3$-action on $W$ by translation. It will be useful to know what are the critical points of $F$.
Recall that a point $x\in M$ is \dfn{critical} for a $\calc^1$-map $f\:M\to N$ between manifolds whenever the 
tangent map $T_xf\:T_xM\to T_{f(x)}N$ is not surjective.

\begin{Proposition}\label{P.crit} 
A point $(s_0,s_1,s_2)\in (S^1)^3-\Delta$ is a critical point for the map $F$ if and only if 
the tangents to $\ga$ at the points $c(t_0)$, $c(t_1)$ and $c(t_2)$ are parallel or concurrent.
\end{Proposition}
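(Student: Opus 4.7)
The plan is to identify criticality of $F$ via a linear algebra computation after composing with $\exp$. Since $\exp\colon W\to V$ is a local diffeomorphism, the critical points of $F$ correspond to those of $F_e = F\circ\exp$. Factor $F_e$ as $W \xrightarrow{F^1_e} \tri^1 \xrightarrow{p} \tri$, where $p$ is the $\calg_2$-quotient. In the identification $\tri^1 \approx \bbc^2-\{(0,0)\}$, the orbit of $\calg_2 \approx \bbr_{>0}$ through $w=(w_1,w_2)$ is a half-line, so the vertical tangent direction of $p$ at $w$ is the real line $\bbr\cdot w$. Hence $(t_0,t_1,t_2)$ is critical for $F_e$ exactly when the image of $dF^1_e$ together with $\bbr\cdot w$ fails to span $T_w\tri^1\approx\bbr^4$.

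Set $v_i := \dtc(t_i)$, so that $\tilde c(t_i)+\bbr v_i$ is the tangent line to $\ga$ at $c(s_i)$. Differentiating $F^1_e(t_0,t_1,t_2)=(\tilde c(t_1)-\tilde c(t_0),\,\tilde c(t_2)-\tilde c(t_0))$ coordinate-wise, the image of $dF^1_e$ is the real span of the three vectors $(-v_0,-v_0),\ (v_1,0),\ (0,v_2)\in\bbc^2$. Together with $w=(\tilde c(t_1)-\tilde c(t_0),\tilde c(t_2)-\tilde c(t_0))$, criticality amounts to the existence of real $a,b,c,d$, not all zero, with
$$
a v_0 + b v_1 \;=\; d\,(\tilde c(t_1)-\tilde c(t_0)), \qquad a v_0 + c v_2 \;=\; d\,(\tilde c(t_2)-\tilde c(t_0)).
$$

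I then split into two cases. If $d=0$, the system reduces to $a v_0 + b v_1 = 0$ and $a v_0 + c v_2 = 0$; since each $v_i$ is non-zero, a non-trivial real solution $(a,b,c)$ exists iff $v_0,v_1,v_2$ lie on a single real line through $0$, i.e.\ the three tangents to $\ga$ are parallel. If $d\neq 0$, rescale to $d=1$ and rewrite the system as
$$
\tilde c(t_0)+a v_0 \;=\; \tilde c(t_1)+(-b)\,v_1 \;=\; \tilde c(t_2)+(-c)\,v_2,
$$
which has a real solution $(a,b,c)$ iff the three tangent lines at $c(s_0),c(s_1),c(s_2)$ have a common point, i.e.\ are concurrent. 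Combining the two cases yields the announced characterisation.

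The only delicate point I foresee is the clean identification of the vertical direction of $p$ and the treatment of borderline configurations such as coincident tangent lines or all tangents parallel and coincident; these fit unambiguously into ``concurrent'' (respectively into both options), and do not break the dichotomy, but merit a short sanity check.
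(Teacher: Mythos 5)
Your proof is correct and follows essentially the same route as the paper: both pass to $F_e$ via the covering $\exp$, compute $T F^1_e$, identify $\ker T\pi$ with $\bbr\cdot w$, and split the resulting linear system according to whether the $\ker T\pi$-coefficient (your $d$, the paper's $\lambda$) vanishes, giving parallelism when it does and concurrency when it does not. The only cosmetic difference is that you phrase criticality as ${\rm Im}(dF^1_e) + \bbr\cdot w$ failing to span $T_w\tri^1$ (non-surjectivity of $TF_e$), whereas the paper phrases it as non-injectivity of $TF_e$; since $\dim W=\dim\tri$ these are equivalent and lead to the identical system.
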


\begin{proof}
It is equivalent to prove the statement for the map $F_e$, since $\exp$ is a covering. 
Since $\dim W=\dim\tri$, a point $(t_0,t_1,t_2)\in W$ is critical for $F_e$ if and only if 
$T_{(t_0,t_1,t_2)}F_e$ is not injective.
Recall that, if $U$ is an open subset of a real vector space $X$, the tangent space 
$T_uU$ at each point $u\in U$ is canonically identified with $X$. Under such identification,
one has
$$%\beq{P.crit-eq15}
T_{(t_0,t_1,t_2)}F^1_e(\lambda_0,\lambda_1,\lambda_2) = 
(\lambda_1\,\dtc(t_1)-\lambda_0\,\dtc(t_0),\lambda_2\,\dtc(t_2)-\lambda_0\,\dtc(t_0)) \, .
$$%\eeq
The point $(t_0,t_1,t_2)\in W$ is critical for $F$ if and only if 
$$
T_{(t_0,t_1,t_2)}F^1_e(\lambda_0,\lambda_1,\lambda_2) \in \ker T_{F_e(t_0,t_1,t_2)} \pi
$$
for all $(\lambda_0,\lambda_1,\lambda_2)\in\bbr^3$, where $\pi\:\tri^1\to\tri$ is the quotient map.
But
$$
\ker T_{(z_1,z_2)} \pi = \bbr\cdot (z_1,z_2) \, .
$$
Indeed, $\bbr_{>0}$-action on $\tri^1$ corresponds to the ``dilatation flow'' $\Phi_t(z_1,z_2)=t(z_1,z_2)$
and $\frac{d}{dt}\Phi_t(z_1,z_2)_{|t=0}=(z_1,z_2)$. Therefore, $(t_0,t_1,t_2)\in W$ is critical for $F$ if and only if 
there exists $(\lambda_0,\lambda_1,\lambda_2)\in\bbr^3$ and $«\lambda\in\bbr$ such that
\beq{P.crit-eq20}
\left\{
\begin{array}{rcl}
\lambda_1\,\dtc(t_1)-\lambda_0\,\dtc(t_0) &=& \lambda(\tilde c(t_1)-\tilde c(t_0)) \\[2mm]
\lambda_2\,\dtc(t_2)-\lambda_0\,\dtc(t_0) &=& \lambda(\tilde c(t_2)-\tilde c(t_0))  \, .
\end{array}\right.
\eeq

If $\lambda=0$, the above system is equivalent to 
$$
\lambda_1\,\dtc(t_1)=\lambda_0\,\dtc(t_0)=\lambda_2\,\dtc(t_2)
$$
which is equivalent to the parallelism of the tangents to $\ga$ at the points $c(t_0)$, $c(t_1)$ and $c(t_2)$.
If $\lambda\neq 0$, then replacing $\lambda_j$ by $\pm\lambda_j/\lambda$ in~\eqref{P.crit-eq20} makes the system
equivalent to
$$
\tilde c(t_1) + \mu_1\,\dtc(t_1) = \tilde c(t_0) + \mu_0\,\dtc(t_0) = \tilde c(t_2) + \mu_2\,\dtc(t_2) 
$$
for some $(\mu_0,\mu_1,\mu_2)\in\bbr^3$.
This is equivalent to the concurrency of the three tangents.
%to $\ga$ at the points $c(t_0)$, $c(t_1)$ and $c(t_2)$.
\end{proof}

\subsection{Compactifications}\label{S.compa}
We now define boundary compactifications $W\subset \hat W$ and $V\subset\hat V$ and extend the maps $F_e$ and $F$ 
to the manifolds with boundary $\hat W$ and $\hat V$.
Let 
$D^2_{\scr 1\!/\!2}=\{(u,v)\in\bbr^2\mid u^2+v^2<1/4\}$ be the disk of radius $1/2$, with boundary $S^1_{\scr 1\!/\!2}$. 
Let us parameterize an open tubular neighborhood of $\tilde\Delta$ in $\bbr^3$ by the embedding
$h\:\tilde\Delta\times S^1_{\scr 1\!/\!2}\times [0,1)\to \bbr^3$ defined by  
$$
h(t+p,t+q,t+r,(u,v),\lambda) = (t+p,t+q+\lambda u,t+r+\lambda v)  \, .
$$
Define
$$
\hat W = \Big(\big\{\tilde\Delta\times S^1_{\scr 1\!/\!2}\times [0,1)\big\} \,\,\dot\cup \,\, W\Big) \, \Big/  \sim  \, ,
$$
where ``$\sim$'' is the smallest equivalence relation such that
\beq{E.dsim}
((t+p,t+q,t+r),(u,v),\lambda) \sim h(t+p,t+q,t+r,(u,v),\lambda)  \ \hbox{ when } \ \lambda\neq 0  \, .
\eeq

As the map $h$ is of class $\calc^\infty$, we check that $\hat W$ is a $\calc^\infty$-manifold with boundary 
$\partial\hat W= \tilde\Delta\times S^1_{\scr 1\!/\!2}\times \{0\}$. 
The inclusion $W\to \hat W$ is a diffeomorphism onto $\hat W-\partial\hat W$.
The inclusion $\tilde\Delta\times S^1_{\scr 1\!/\!2}\times [0,1)\to \hat W$ 
is an embedding onto an open collar of $\partial\hat W$.

The above construction is invariant under the action of $\bbz^3$ on $\bbr^3$ by translations, so $\bbz^3$ acts freely and
properly on $\hat W$. The quotient $\hat V = \hat W/\bbz^3$ is a compact $\calc^\infty$-manifold, 
with boundary $\partial\hat V\approx S^1\times S^1$ and with a diffeomorphism from $V$ onto $\hat V-\partial\hat V$.

\begin{Lemma}\label{L.hatF}
Let $c\:S^1\to\bbc$ be a $\calc^1$-embedding. Then, the map $F$ of~\eqref{E.dfc} extends to a continuous map
of pairs
$$
\hat F \: (\hat V,\partial\hat V) \to (\tri,\ftri) \, .
$$
\end{Lemma}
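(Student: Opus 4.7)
The natural strategy is to work first with the lift $F_e$ on $\hat W$: since $W$, $\hat W$, the collar map $h$, and $F_e$ are all equivariant with respect to the translation $\bbz^3$-action, a continuous $\bbz^3$-invariant extension of $F_e$ across $\partial \hat W$ will descend to the desired $\hat F$ on $\hat V$. Near $\partial \hat W$ one parametrises points by the collar coordinates $(t+p,t+q,t+r,(u,v),\lambda)$ with $(p,q,r)\in\bbz^3$, $\lambda\in[0,1)$ and $(u,v)\in S^1_{1/2}$, the boundary being $\lambda=0$.

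For $\lambda>0$ one has, using the $\bbz$-periodicity $\tilde c(t+p)=\tilde c(t+q)=\tilde c(t+r)=\tilde c(t)$ and $\dtc(t+p)=\dtc(t+q)=\dtc(t+r)=\dtc(t)$ together with a first-order Taylor expansion (which is available because $c$ is $\calc^1$),
$$
F^1_e\big(h(t+p,t+q,t+r,(u,v),\lambda)\big) \,=\, \lambda\,\big(u\,\dtc(t),\, v\,\dtc(t)\big) + o(\lambda),
$$
with the $o(\lambda)$ uniform on compact subsets of $(t,(u,v))$. Thus $F^1_e$ itself vanishes on the boundary, but since the quotient $\pi\colon\tri^1\to\tri$ identifies vectors up to multiplication by a positive scalar, the offending factor $\lambda$ is immaterial in $\tri$: for $\lambda>0$,
$$
F_e\big(h(t+p,t+q,t+r,(u,v),\lambda)\big) \,=\, \pi\big((u\,\dtc(t), v\,\dtc(t)) + o(1)\big),
$$
which converges, as $\lambda\to 0^+$, to $\pi(u\,\dtc(t), v\,\dtc(t))$. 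This limit is well defined in $\tri$ because $\dtc(t)\neq 0$ (by the immersion hypothesis) and $(u,v)\neq (0,0)$.

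The above prescription therefore defines $\hat F_e$ on $\partial\hat W$, provides a continuous extension of $F_e$, is manifestly $\bbz^3$-invariant, and so descends to the sought $\hat F$ on $\hat V$. It remains to verify that the boundary image lands in $\ftri$: the pair $(u\,\dtc(t), v\,\dtc(t))=\dtc(t)\cdot(u,v)$ has both components real multiples of $\dtc(t)$ (because $u,v\in\bbr$), hence is $\bbr$-linearly dependent in $\bbc^2$ and therefore belongs to $\ftri^1$, whose image in $\tri$ is $\ftri$. The main obstacle is checking that the Taylor remainder above is genuinely $o(\lambda)$ \emph{uniformly} in $(u,v)$ on the compact circle $S^1_{1/2}$ (and locally uniformly in $t$), which is precisely the point where the $\calc^1$ regularity of $c$ enters, and without which the extension would not be continuous.
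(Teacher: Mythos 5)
Your strategy matches the paper's exactly: pass to the $\bbz^3$-equivariant lift $\hat F_e$ on $\hat W$, compute $F^1_e\pcirc h$ in collar coordinates using the first-order expansion of $\tilde c$, factor out the scalar $\lambda$ (which dies in the quotient $\tri^1\to\tri$), identify the boundary limit $[(u\,\dtc(t),v\,\dtc(t))]$, check it is a flat pair, and descend to $\hat V$. All of that is correct and is the argument in the paper.

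Where you fall short is precisely the step you yourself flag at the end. You state that the Taylor remainder is $o(\lambda)$ ``uniform on compact subsets of $(t,(u,v))$'' and then use this uniformity to conclude $\pi\big((u\,\dtc(t),v\,\dtc(t))+o(1)\big)\to\pi\big(u\,\dtc(t),v\,\dtc(t)\big)$, but you never actually prove the uniformity; your closing sentence merely identifies it as ``the main obstacle.'' A pointwise first-order expansion holds for merely differentiable $\tilde c$ and would \emph{not} give continuity of $\hat F_e$; it is $\calc^1$ that gives local uniformity, and this is the content of the lemma, not something to wave at. The paper closes this gap by taking an arbitrary sequence $Z_n\to Z_\infty$ in $\hat W$ with $\lambda_\infty=0$ and applying the mean value theorem coordinatewise: $\tilde c_{\rm re}(t_n+\lambda_nu_n)-\tilde c_{\rm re}(t_n)=\lambda_nu_n\,\dtc_{\rm re}(t_n+\mu_n)$ with $|\mu_n|\le|\lambda_nu_n|$, so that continuity of $\dtc$ yields the required convergence. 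You should include this (or an equivalent integral-form-of-the-remainder argument exploiting uniform continuity of $\dtc$ on compacts), since without it the proof is incomplete.
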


\begin{proof} 
We define a $\bbz^3$-invariant continuous extension $\hat F_e\:(\hat W,\partial\hat W)\to (\tri,\ftri)$. 
This provides $\hat F$ by passing to the quotient.  

The map $\hat F_e$ restricts to $F_e$ on $W$. Therefore, on $\tilde\Delta\times S^1_{\scr 1\!/\!2}\times (0,1)$, 
it must be equal to $F_e\pcirc h$. Let $\tilde c\:\bbr\to\bbc$ defined by $\tilde c(t)=c(e^{2i\pi t})$. 
Note that $\tilde c(t+m)=c(t)$ for $m\in\bbz$.
Hence, for $Z= ((t+p,t+q,t+r),(u,v),\lambda)$ and $\lambda\neq 0$, one has
\beq{L.hatF-eq1}
\begin{array}{rcll}
F^1_e\pcirc h(Z)  
 &=&
(\tilde c(t+\lambda u)- \tilde c(t),\tilde c(t+\lambda v)- \tilde c(t))  \\[2mm] &=&
(\lambda u\,\dtc(t) + \pcirc(\lambda u) ,\lambda v\,\dtc(t) + \pcirc(\lambda v)) & \hbox{\small since $c$ is $\calc^1$}
\\[2mm] &=&
(\lambda( u\,\dtc(t) + \frac{\pcirc(\lambda u)}{\lambda}) ,\lambda( v\,\dtc(t) + \frac{\pcirc(\lambda v)}{\lambda})) 
\end{array}
\eeq
The last expression has same image in $\tri$ as 
$(u\,\dtc(t) + \frac{\pcirc(\lambda u)}{\lambda} ,v\,\dtc(t) + \frac{\pcirc(\lambda v)}{\lambda})$ and, in $\tri^1$, one has
the following convergence
\beq{L.hatF-eq12}
\dia{
(u\,\dtc(t) + \frac{\pcirc(\lambda u)}{\lambda} ,v\,\dtc(t) + \frac{\pcirc(\lambda v)}{\lambda}) \kern 2mm
\ar[r]_(.56){(\lambda\to 0)} & 
\kern 2mm (u\,\dtc(t) ,v\,\dtc(t) )  \in \ftri^1 \, .
}
\eeq
We are thus driven to define
$$
\hat F_e\pcirc h(Z) = 
\left\{
\begin{array}{ll}
F_e\pcirc h(Z) & \hbox{if  $Z\in W$} \\[1mm]
 [(u\,\dtc(t) ,v\,\dtc(t) )] & \hbox{if  $\lambda=0$ }\, ,
\end{array}\right.
$$
where $[\,]$ denotes the class in $\tri$. To check the continuity of $\hat F_e$, we have to take 
a converging sequence $Z_n\to Z_\infty$ in $\hat W$ and see that $\hat F_e\pcirc h(Z_n)\to \hat F_e\pcirc h(Z_\infty)$.
Set
$$
Z_n= ((t_n+p_n,t_n+q_n,t_n+r_n),(u_n,v_n),\lambda_n)
$$ and $$
Z_\infty= ((t_\infty+p_\infty,t_\infty+q_\infty,t_\infty+r_\infty),(u_\infty,v_\infty),\lambda_\infty) \, .
$$
Only the case $\lambda_\infty=0$ requires a proof.
As $p_n$, $q_n$ and $r_n$ are integers which play no role, one may assume that $(p_n,q_n,r_n)=(0,0,0)$.
Since $\hat F_e|\partial \hat W$ is continuous, one may assume, restricting to a subsequence if necessary, 
that $\lambda_n\neq 0$ if $n<\infty$. Let us decompose $\tilde c$ in its real and imaginary part:
$\tilde c(t)=\tilde c_{\rm re}(t) + i\,\tilde c_{\rm im}(t)$. By the mean value theorem, one has
$$
\dtc_{\rm re}(t_n+\lambda_nu_n)-\dtc_{\rm re}(t_n) = \lambda_nu_n\,\dtc_{\rm re}(t_n+\mu_n)
$$
with $|\mu_n|\leq |\lambda_nu_n|$. We can write the analogous equation for 
$\dtc_{\rm re}(t_n+\lambda_nv_n)-\dtc_{\rm re}(t_n)$ and for the imaginary parts, and use them
in the computations like in~(2.6--7). This proves that $\hat F_e\pcirc h(Z_n)\to \hat F_e\pcirc h(Z_\infty)$.
\end{proof}

%%%%%%%%%%%%%%%%%%%%%%%%%%
\section{Bidegree}

In this section $H_*(\,)$ denotes the singular homology with $\bbz_2$ as coefficients. 
The various manifolds are topological manifolds and may be non-orientable.
The following lemma will be useful.

\begin{Lemma}\label{L.Hnvanish}
Let $X$ be a compact connected topological $n$-manifold with (possibly empty) boundary $\partial X$. 
Let $K$ be a non-empty discrete set in $X-\partial X$. Then $H_n(X-K,\partial X)=0$.
\end{Lemma}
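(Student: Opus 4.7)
The crucial observation is that in this section the notation $H_*$ means \emph{mod-$2$} singular homology (as declared in the first sentence of the section), so no orientability hypothesis on $X$ is needed and Poincar\'e--Lefschetz duality is available in full generality: every topological manifold is $\bbz_2$-orientable. The plan is to apply that duality to the (possibly non-compact) manifold-with-boundary $M := X \setminus K$.

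First I would set up $M$. Since $K$ is closed and discrete in $X$ (in the paper's intended usage $K$ will be finite, but countable closed-discrete also causes no trouble), $M$ is open in $X$ and inherits the structure of a topological $n$-manifold with $\partial M = \partial X$. The duality statement I would invoke is
$$
H_n(M, \partial M) \;\cong\; H^0_c(M),
$$
where all coefficients are in $\bbz_2$. The right-hand side is the space of compactly supported locally constant $\bbz_2$-valued functions on $M$, which is a direct sum of copies of $\bbz_2$ indexed by the \emph{compact} connected components of $M$.

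The heart of the argument is then the topological claim that no component of $M$ is compact, which I would prove by contradiction. Any connected component $C$ of $M$ is open in $M$ (manifolds being locally connected), and hence open in $X$. If $C$ were compact, it would also be closed in the Hausdorff space $X$, giving a non-empty clopen subset of the connected space $X$; this forces $C = X$ and hence $K = \emptyset$, contradicting the non-emptiness of $K$. Thus the right-hand side of the displayed isomorphism vanishes, and so does $H_n(X \setminus K, \partial X)$.

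The main obstacle is justifying Poincar\'e--Lefschetz duality in this stated generality (topological, possibly non-orientable, non-compact, with boundary). With mod-$2$ coefficients this is entirely standard, and I would cite a reference such as Hatcher's \emph{Algebraic Topology} (Ch.~3) rather than rederive it here.
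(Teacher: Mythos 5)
Your proof is correct, and it runs on the same engine as the paper's --- Poincar\'e--Lefschetz duality with $\bbz_2$ coefficients converting $H_n$ into an $H^0$ that is then killed by a connectivity argument --- but the implementation is genuinely different. The paper stays in the compact world: it thickens $K$ to a disjoint union $\cald$ of compact disks, replaces $X-K$ by the compact manifold $Y=X-{\rm int}\,\cald$ via a homotopy equivalence, applies duality for a compact manifold whose boundary is split as $\partial X\,\dot\cup\,\partial\cald$ to get $H_n(Y,\partial X)\cong H^0(Y,\partial\cald)$, and concludes from the connectedness of $Y$ together with $\partial\cald\neq\emptyset$ (which is why it assumes $n>1$ and treats $n\le 1$ separately). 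You instead apply the non-compact version of duality directly to $M=X\setminus K$, identifying $H_n(M,\partial M)$ with $H^0_c(M)$ and showing $M$ has no compact component; your clopen argument for that last point is complete and correct. What your route buys: no disk excision, no homotopy-equivalence step, and a uniform treatment of all $n$ (and even of infinite closed discrete $K$, though this is vacuous since a closed discrete subset of a compact space is finite). What it costs: you must invoke Lefschetz duality for non-compact topological manifolds with boundary and compactly supported cohomology, which is indeed standard with $\bbz_2$ coefficients but is not the literal statement in Hatcher (Theorem 3.43 there is the compact case); when citing, make sure the reference covers the non-compact, with-boundary situation, or reduce to it by doubling along $\partial M$ or by the usual direct-limit argument over compact exhaustions.
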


\begin{proof}
We may suppose that $n>1$, otherwise the easy proof is left to the reader.
Since $X$ is compact, $K$ is finite. Let $\cald\subset X-\partial X $ be a disjoint family of compact disks 
forming a tubular neighborhood of $K$. Then $Y=X-{\rm int\, }\cald$ is a compact connected manifold with
$\partial Y$ being the disjoint union of $\partial X$ and $\partial\cald$. 
Since the inclusion $Y\subset X-K$ is a homotopy equivalence, one has $H_n(X-K,\partial X)\approx H_n(Y,\partial X)$.
By Poincar\'e duality (see e.g. \cite[Theorem~]{Hatcher} or \cite[Corollary~29.11]{Ha}), $H_n(Y,\partial X)\approx H^0(Y,\partial\cald)$.
As $n>1$, $Y$ is connected. Therefore, since $K$ is not empty, so is $\partial\cald$ and thus $H^0(Y,\partial\cald)=0$.
\end{proof}

Let $M$ be a closed connected manifold of dimension $m$, containing a closed submanifold $N$ of codimension one which separates $M$ into 
two connected manifolds $M_\pm$, with common boundary $N$. Let $V$ be a tubular neighborhood of $N$ in $M$.
By homotopy and excision, one has the isomorphisms
 $$
H_n(M,N) \approx H_n(M,V) \approx H_n(M-{\rm int} V,\partial V) \, . %\approx H_n(M_-,N)  \oplus H_n(M_+,N)  \approx \bbz_2\oplus\bbz_2 \, .
$$
As $N$ separates, $V$ is of the form $N\times [-1,1]$ with $N\times\{\pm 1\} \subset M_\pm$ and
one has a homotopy equivalence of pairs
$$
\begin{array}{rcl}
(M-{\rm int} V,\partial V) &=& (M_--{\rm int} V,N\times\{-1\})\, \dot\cup\, (M_+-{\rm int} V,N\times\{1\}) 
\\[2mm] &\simeq &
(M_-,N) \,\dot\cup \, (M_+,N)
\end{array}
$$
Hence,
$$
H_n(M,N) \approx H_n(M_-,N)  \oplus H_n(M_+,N)  \approx \bbz_2\oplus\bbz_2 \, .
$$

Let $X$ be a compact connected manifold of dimension $n$ and let  $f\:(X,\partial X)\to (M,N)$ be a continuous map of pairs.
Consider the following commutative diagram
$$
\dia{
H_n(X,\partial X) \ar[r]^{f_*} \ar[d]^{\approx} &  H_n(M,N) \ar[d]^{\approx} \\
%%%ROW2
\bbz_2 \ar[r]^{\hat f_*} & \bbz_2\oplus \bbz_2
} .
$$
The couple 
$$
\bdg(f)=\hat f_*(1)\in \bbz_2\oplus \bbz_2
$$ 
is called the \dfn{bidegree} of $f$. 
 
\begin{Proposition}\label{P.bidegSurj}
If $\bdg(f)=(1,1)$, then $f$ is surjective.
\end{Proposition}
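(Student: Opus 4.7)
The plan is to prove the contrapositive: if $f$ is not surjective, then $\bdg(f)\neq (1,1)$. Since $X$ is compact and $M$ is Hausdorff, $f(X)$ is closed in $M$, so its complement is open. Because $N$ has codimension one, $M\smallsetminus N$ is open and dense in $M$, so we can pick a point $p\in M\smallsetminus f(X)$ with $p\notin N$. After relabelling, assume $p\in M_+\smallsetminus N$, which is precisely the interior (as a manifold with boundary) of $M_+$.

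The map $f\:(X,\partial X)\to (M,N)$ factors as a composition of maps of pairs
$$
(X,\partial X) \xrightarrow{\ f\ } (M\smallsetminus\{p\},N) \xrightarrow{\ j\ } (M,N),
$$
so $\bdg(f)$ lies in the image of $j_*\:H_n(M\smallsetminus\{p\},N)\to H_n(M,N)$. The strategy now is to show that this image is contained in a single $\bbz_2$-summand of $H_n(M,N)\approx H_n(M_-,N)\oplus H_n(M_+,N)$.

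To compute $H_n(M\smallsetminus\{p\},N)$, I would repeat the excision argument recalled just before the proposition: using the tubular neighborhood $V\approx N\times[-1,1]$ of $N$ in $M$, excise $\mathrm{int}\,V$ from $M\smallsetminus\{p\}$ (note that $p\notin V$ once $V$ is chosen small enough, since $p\in M_+\smallsetminus N$ and $N$ is closed). This yields
$$
H_n(M\smallsetminus\{p\},N) \approx H_n(M_-,N)\oplus H_n(M_+\smallsetminus\{p\},N),
$$
and by naturality of the excision/homotopy isomorphisms with respect to the inclusions, $j_*$ restricts on each summand to the corresponding inclusion into $H_n(M_-,N)\oplus H_n(M_+,N)$. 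Now $M_+$ is a compact connected $n$-manifold with boundary $N$, and $\{p\}$ is a non-empty discrete subset of $M_+\smallsetminus\partial M_+$, so Lemma~\ref{L.Hnvanish} gives $H_n(M_+\smallsetminus\{p\},N)=0$. Hence the image of $j_*$ is contained in $H_n(M_-,N)\oplus 0$, so the second component of $\bdg(f)$ vanishes. Had we chosen $p\in M_-\smallsetminus N$, the first component would vanish instead. Either way $\bdg(f)\neq(1,1)$, contradiction.

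The only slightly delicate step is the identification of the decomposition of $H_n(M\smallsetminus\{p\},N)$ with the corresponding summands of $H_n(M,N)$, i.e.\ checking that the excision, homotopy, and inclusion maps fit together naturally; but this is formally identical to the computation of $H_n(M,N)$ already carried out in the text, with $M_+$ replaced throughout by $M_+\smallsetminus\{p\}$, and the only new input is the vanishing supplied by Lemma~\ref{L.Hnvanish}.
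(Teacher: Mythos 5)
Your proof is correct and follows the same route as the paper: pick a point $p\in M_\pm\smallsetminus N$ missed by $f$, invoke Lemma~\ref{L.Hnvanish} to kill $H_n(M_\pm\smallsetminus\{p\},N)$, and conclude that the corresponding component of $\bdg(f)$ vanishes. You have merely made explicit the factorization of $f$ through $(M\smallsetminus\{p\},N)$ and the naturality of the excision decomposition, steps the paper leaves implicit in its ``Therefore''.
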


\begin{proof}
Suppose that $f$ is not surjective. As $X$ is compact, the set of points $u\in M$ with empty pre-image is open. 
Hence, there is such a point in $u\in M-N$, say $u\in M_+$. But $H_n(M_+-\{u\},N)=0$ by \lemref{L.Hnvanish}. 
Therefore, $\bdg(f)\neq (1,1)$.
\end{proof}

The bidegree may be computed locally.
A point $u\in M-N$ is a \dfn{topological regular value}
for $f$
if there is a neighborhood $U$ of $u$ in $M-N$ such that 
$U$ is ``\dfn{evenly covered}'' by $f$. By this, we mean that
$f^\mun(U)$ is a disjoint union of subspaces $U_j$, indexed by a set $\calj$,
such that, for each $j\in\calj$, the restriction of $f$ to $U_j$ 
is a homeomorphism from $U_j$ to $U$.
In particular, $f^\mun(u)$ is a discrete closed subset of $X$
indexed by $\calj$, so $\calj$ is finite since $X$ is compact.
For instance, a point $u$ which is not in the range of $f$ is a topological regular
value of $f$ (with $\calj$ empty). 
For a topological regular value $u$ of $f$, we define the \dfn{local degree 
$d(f,u)\in\bbz_2$ of $f$ at $u$} by 
$$
d(f,u)=\sharp f^\mun(u) \quad {\rm mod\,}2  \, .
$$

\begin{Proposition}\label{P.degree-sing}
Let $f\:(X,\partial X)\to (M,N)$ as above.
For any topological regular values $u_\pm\in M\pm$ of $f$, one has
$$
\bdg(f)=(d(f,u_-),d(f,u_+)).
$$
\end{Proposition}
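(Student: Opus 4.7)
The plan is to compute each coordinate of the bidegree $\bdg(f)\in\bbz_2\oplus\bbz_2$ separately by passing to local homology at the chosen regular value. First I would show that, under the splitting $H_n(M,N)\cong H_n(M_+,N)\oplus H_n(M_-,N)$ established in the paragraphs preceding the statement, the projection onto the $H_n(M_+,N)$--factor coincides, up to natural inclusion-induced isomorphisms, with the map
\[
H_n(M,N)\longrightarrow H_n(M,M\setminus\{u_+\})
\]
induced by inclusion of pairs (and likewise on the $-$ side with $u_-$). Indeed $M_-\subset M\setminus\{u_+\}$ kills the $H_n(M_-,N)$--summand, while the inclusion $(M_+,N)\hookrightarrow(M,M\setminus\{u_+\})$ induces an isomorphism between two copies of $\bbz_2$, both representing the local orientation at $u_+$ (the latter by excising a small disk around $u_+$ from $M_-$).

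The second step is to factor the composition
\[
H_n(X,\partial X)\stackrel{f_*}{\longrightarrow}H_n(M,N)\longrightarrow H_n(M,M\setminus\{u_+\})
\]
through $H_n(X,X\setminus f^{-1}(u_+))$; the inclusion $\partial X\subset X\setminus f^{-1}(u_+)$ is legitimate because $f(\partial X)\subset N$ and $u_+\notin N$. Since $X$ is compact, the evenly covered hypothesis implies that $f^{-1}(u_+)=\{x_1,\dots,x_k\}$ is finite, with pairwise disjoint open neighborhoods $U_j\ni x_j$ each mapped homeomorphically onto a common small disk $D$ about $u_+$. Excision then yields
\[
H_n(X,X\setminus f^{-1}(u_+))\cong\bigoplus_{j=1}^{k}H_n(U_j,U_j\setminus\{x_j\}),
\]
and under $f_*$ each summand is carried isomorphically onto $H_n(D,D\setminus\{u_+\})\cong\bbz_2$.

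The crux is the standard property of the $\bbz_2$--fundamental class: the generator of $H_n(X,\partial X)$ restricts in the above direct sum to the sum of the local orientation generators at the $x_j$, and hence is carried by $f_*$ to $k\bmod 2=d(f,u_+)$ in $H_n(D,D\setminus\{u_+\})\cong\bbz_2$. Combined with the first step, this identifies the $+$--coordinate of $\bdg(f)$ with $d(f,u_+)$; the same argument with $u_-$ yields the other coordinate. The degenerate case $f^{-1}(u_\pm)=\emptyset$ is automatic: then $X\setminus f^{-1}(u_\pm)=X$, so $H_n(X,X)=0$ forces the composition to vanish, matching $d(f,u_\pm)=0$.

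I expect the main obstacle to be the compatibility asserted in the first step, namely matching the projection onto $H_n(M_+,N)$ (defined via a tubular neighborhood of $N$) with the inclusion-induced map into $H_n(M,M\setminus\{u_+\})$. Over $\bbz_2$ with all relevant groups one-dimensional this is essentially forced by non-vanishing, but it still demands a careful diagram chase through the excisions used to construct the splitting in \proref{P.tritrifl}'s surrounding discussion.
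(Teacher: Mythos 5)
Your proposal is correct and follows essentially the same route as the paper: project onto one summand of $H_n(M,N)\cong H_n(M_+,N)\oplus H_n(M_-,N)$, identify that projection with the inclusion-induced map into local homology at $u_+$, excise down to the evenly-covered preimage, and count mod $2$. The diagram chase you anticipate at the end is exactly what the paper supplies (two explicit commutative diagrams), and the ``standard property of the $\bbz_2$-fundamental class'' you invoke is what the paper establishes via Lemma~\ref{L.Hnvanish}: applying it with $K=\{x_j\}$ for each $j$ shows the restriction $H_n(X,\partial X)\to H_n(X,X\setminus\{x_j\})\cong\bbz_2$ is injective, hence nonzero, so the fundamental class does restrict to $(1,\dots,1)$ in $\bigoplus_j\bbz_2$.
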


\begin{proof}
We prove that the $H_n(M_+,N)$-component of $\bdg(f)$ is equal to $d(f,u_+)$. The argument for the other
component is the same. The inclusions of pairs $i_\pm\:(M_\pm,N)\to (M,N)$ and $j_\pm\:(M,N)\to (M,M_\pm)$
induce homomorphisms
$$
\dia{
H_n(M_-,N)\oplus H_n(M_+,N) \ar[r]^(0.6){H_*i_-+H_*i_+}  \ar[dr]_{\approx}
& H_n(M,N)  \ar[d]^{(H_*j_+,H_*j_-)} \\
%%%ROW2
& H_n(M,M_+)\oplus H_n(M,M_-)
}
$$
whose composition is an isomorphism by excision. The three groups above being isomorphic to $\bbz_2\oplus\bbz_2$,
all the arrows are  isomorphisms. Hence, the $H_n(M_+,N)$-component of $\bdg(f)$ is equal
to $H_*j_-\pcirc \hat f(1)$. One may suppose that $u_+$ has a neighborhood $D$ evenly 
covered by $f$ which is homeomorphic to a compact $n$-disk. Let $\calu=f^\mun(\{u_+\})$ and $\cald=f^\mun(D)$.
One has the commutative diagram
$$
\dia{
H_n(X,\partial X) \ar[r]^{H_*f} \ar@{>->}[d] &
H_n(M,N) \ar[r]^{H_*j_-} &
H_n(M,M_-) \ar[d]^{\approx}  &
\\
%%%ROW2
H_n(X,X-\calu) \ar[rr]^{H_*f} &&
H_n(M,M-\{u_+\})  \\
%%%ROW3
H_n(\cald,\partial\cald) \ar[u]_{\approx}  \ar[rr]^{H_*f_\cald} &&
H_n(D,\partial D) \ar[u]_{\approx} 
}
$$
where the bottom vertical arrows are isomorphisms by excision. 
The top left vertical arrows is injective, since, in the 
homology exact sequence of the triple $(X,X-\calu,\partial X)$
$$
\dia{
H_n(X-\calu,\partial X) \ar[r] &  H_n(X,\partial X)  \ar[r]^(.45){} &   H_n(X,X-\calu) \, ,
}
$$ 
the left term vanishes by \lemref{L.Hnvanish} (one may suppose that $\calu\neq\emptyset$ since, otherwise
the proof is trivial). 
The top right vertical arrows is injective by the same argument, so it is an isomorphism 
since its range is isomorphic to $\bbz_2$. Writing $\cald$ as a disjoint union of compact disks
$\cald_1,\dots,\cald_k$, one has a commutative diagram
$$
\dia{
H_n(\cald,\partial\cald) \ar[d]^{H_*f_\cald} \ar@{<-}[r]^(0.45){\approx} &
\bigoplus_{j=1}^k H_n(\cald_j,\partial\cald_j) \ar[d]^{\sum H_*f_{D_j}} \ar[r]^(0.6){\approx} &
\bigoplus_{j=1}^k \bbz_2 \ar[d]^{+}
\\
%%%ROW2
H_n(D,\partial D) \ar@{<->}[r]^{=} & H_n(D,\partial D) \ar[r]^(0.55){\approx} & \bbz_2
}
$$
which proves that the $H_n(M_+,N)$-component of $\bdg(f)$is equal to $d(f,u_+)$.
\end{proof}

\begin{Remark}\label{R.hty} \rm
Let $f,g\:(X,\partial X)\to (M,N)$ be two maps as above which are homotopic (as maps of pairs).
As $f_*=g_*$, one has $\bdg(f)=\bdg(g)$. \mancqfd 
\end{Remark}

\section{Proof of the main theorem}\label{S.pfmainth}

Let $c\:S^1\to\bbc$ be a $\calc^1$-embedding parametrizing $\ga$.  
Denote by $F_c\,\: V\to \tri$ the map $F$ of~\eqref{E.dfc} for the parametrization $c$ and
by $\hat F_c \,\: (\hat V,\partial\hat V) \to (\tri,\ftri)$ its continuous extension given by \lemref{L.hatF}.
It suffices to prove that $\bdg(\hat F_c)=(1,1)$, which, using \proref{P.bidegSurj}, implies that $\hat F_c$ is surjective.
Indeed, as $\hat F_c(\partial \hat V)\subset \ftri$, the class of a non-flat triangle will be in $F_c(V)$, which
proves the main theorem.

Consider particular case where $\ga=S^1$, i.e. $c$ is a $\calc^1$-parametrisation of the unit circle. 
By \proref{P.crit}, any non-flat triangle $T$ on $\ga$ is a smooth regular value of $F_c$ and $F_c^\mun(T)$ contains a single point.
By the inverse function theorem, $T$ is a topological regular value of $\hat F_c$. 
Using \proref{P.degree-sing} for $T$ and its conjugate $\bar T$, this proves that $\bdg(\hat F_c)=(1,1)$.

For a general simple closed curve $\ga$ of class $\calc^1$, we shall prove that there is an isotopy 
of $\calc^1$-embeddings $c_t\:S^1\to\bbc$ satisfying $c_0=c$ and $c_1(S^1)=S^1$. The map
$\hat F_{c_t}\:(\hat V,\partial\hat V) \to (\tri,\ftri)$ is then a homotopy of maps of pairs between 
$\hat F_c$ and $\hat F_{c_1}$
(the continuity of $\hat F_{c_t}$ may be checked with the arguments of the end of the proof of \lemref{L.hatF}). 
Using \remref{R.hty} and that $c_1(S^1)=S^1$, this will prove that $\bdg(\hat F_c) =\bdg(\hat F_{c_1}) = (1,1)$. 

The construction of the isotopy $c_t$ proceeds as follows.
\begin{itemize}
 \item[(a)] If $\gamma\:S^1\to\bbc$ is a $\calc^1$-map which is close enough to $c$ in the $\calc^1$-metric,
then $\gamma$ is an injective immersions and $tc+(1-t)\gamma$ produces a $\bbc^1$-isotopy between $c$ and $\gamma$.
As such a $\gamma$ may be chosen of class $\calc^\infty$ \cite[p.~49]{Hirsch},
 we may thus suppose that $c$ is of class $\calc^\infty$.

\item[(b)] By the Schoenflies theorem \cite[Theorem 5.4]{KC}, the embedding $c$ extends to a 
$\calc^\infty$-embedding $\bar c\:D\to\bbc$, were $D$ is the unit disk. 
Such an embedding is isotopic to a diffeomorphism of $D$. 
Indeed, composing with a translation (isotopic to the identity), one may suppose that $\bar c(0)=0$.
Then the map
$$
\bar c_t(z) = 
\left\{\begin{array}{ll}
\frac{1}{t}\bar c(tz) & \hbox{if $t\neq 0$} \\[2mm]
D_0\bar c(z)  & \hbox{if $t=0$.}
\end{array}\right.
$$ 
\end{itemize}
is a $\calc^\infty$-isotopy between $\bar c$ and a $\bbr$-linear embedding. But $GL(2,\bbr)$
has two connected components, both containing isometries.

\begin{Remark}\label{R.schoe2}\rm
The proof of Schoenflies theorem given in \cite[Theorem 5.4]{KC} is not detailed for the smooth case. 
At least, the topological Schoenflies theorem implies that $c(S^1)$ bounds a disk.
The embedding $\bar c$ may then be obtained by the Riemann mapping theorem with its extension to the boundary
(see e.g. \cite{BellKrantz}).
\end{Remark}

% \sk{2}
% Remark: the proof of Schoenflies theorem given in \cite[Theorem 5.4]{KC} is not detailed for the smooth case. 
% At least, the topological Schoenflies theorem implies that $c(S^1)$ bounds a disk.
% The embedding $\bar c$ may then be obtained by the Riemann mapping theorem with its extension to the boundary
% (see e.g. \cite{BellKrantz}).
%(see e.g.~\cite[p.~296]{Gamelin})

\section{Generalization to higher dimensions}\label{S.gener}

%This section was written following remarks by Michelle Bucher-Karlsson. \cb

A \dfn{Jordan $\calc^1$-sphere $\ga$} in $\bbr^n$ is a 
$\calc^1$-submanifold of $\bbr^n$ diffeomorphic to the standard sphere $S^{n-1}$. 
An $n$-simplex of $\bbr^n$ is \dfn{flat} if it is contained in an affine subspace of dimension $n-1$.
The main theorem admits the following generalization, as suggested by Michelle Bucher-Karlsson.

\begin{Theorem}\label{mainThGen}
%Consider in $\bbr^n$ a Jordan $\calc^1$-sphere $\ga$ and a non-flat $n$-simplex~$T$.
Let $\ga$ be a Jordan $\calc^1$-sphere in $\bbr^n$ and let $T$ be a non-flat $n$-simplex in $\bbr^n$.
Suppose that $n\neq 4$.
Then, by a translation and a homothety with positive ratio, $T$ may be transformed into a simplex whose
vertices lie on~$\ga$. 
\end{Theorem}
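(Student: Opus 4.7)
The plan is to reproduce the strategy of Section~\ref{S.pfmainth} almost verbatim; the hypothesis $n\neq 4$ enters only at the final isotopy step. Identifying the ambient space with $\bbr^n$, one sets $\tri^0=(\bbr^n)^{n+1}\setminus\Delta$, quotients by translations to get $\tri^1\approx(\bbr^n)^n\setminus\{0\}$, and further by positive homotheties to get $\tri\approx S^{n^2-1}$. The subspace $\ftri^1\subset\tri^1$ of flat simplices is the zero locus of the $n\times n$ determinant $\det(z_1,\ldots,z_n)$, positively homogeneous of degree $n$, so its sign descends to $\tri$ and splits $\tri\setminus\ftri$ into two open components $\tri_\pm=\{\pm\det>0\}$. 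Although $\ftri$ is not a smooth submanifold, its singular locus (matrices of rank $\leq n-2$) has codimension $4$ in $\tri$ and codimension $3$ in $\ftri$, so a short Alexander duality argument still gives $H_{n^2-1}(\tri,\ftri;\bbz_2)\approx\bbz_2\oplus\bbz_2$, and the bidegree construction of Section~3 applies to maps of pairs $(\hat V,\partial\hat V)\to(\tri,\ftri)$.

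With $V=(S^{n-1})^{n+1}\setminus\Delta$, one has $\dim V=(n-1)(n+1)=n^2-1=\dim\tri$, and the main map $F\colon V\to\tri$ of Section~\ref{S.mainmap} is defined exactly as before. Proposition~\ref{P.crit} carries over unchanged, the linear system \eqref{P.crit-eq20} merely gaining extra equations. The compactification of Section~\ref{S.compa} is built around the full diagonal in $\bbr^{n+1}$ only: partial collisions of points give degenerate simplices which $F$ already maps into $\ftri$ with no continuity issue. The adaptation of Lemma~\ref{L.hatF} is immediate: when all $n+1$ parameters collapse to a single $t$, the first-order Taylor expansion places the limit vertices in $T_{c(t)}\ga$, an $(n-1)$-dimensional affine subspace, so the limit simplex is flat and $\hat F(\partial\hat V)\subset\ftri$.

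For the standard sphere $\ga=S^{n-1}$ the bidegree is computed directly: any non-flat $n$-simplex $T$ admits a unique circumscribed sphere, so there is a unique translation plus positive homothety placing its ordered vertices on $S^{n-1}$, giving a single pre-image of $[T]$ under $F_c$ which is regular by Proposition~\ref{P.crit}. Running this on $T$ and its mirror image, which lie in opposite components of $\tri\setminus\ftri$, Proposition~\ref{P.degree-sing} yields $\bdg(\hat F_c)=(1,1)$. To transfer this to an arbitrary Jordan $\calc^1$-sphere $\ga$, one constructs an isotopy of $\calc^1$-embeddings $c_t\colon S^{n-1}\to\bbr^n$ with $c_0=c$ and $c_1(S^{n-1})=S^{n-1}$: first smooth $c$ to $\calc^\infty$; next extend it to a $\calc^\infty$-embedding $\bar c\colon D^n\to\bbr^n$ by the smooth Schoenflies theorem; then apply the radial isotopy $\bar c_t(z)=t^{-1}\bar c(tz)$ (with limit $D_0\bar c$ at $t=0$); and finally a path in $GL(n,\bbr)$ connecting $D_0\bar c$ to an orthogonal transformation. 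Remark~\ref{R.hty} and Proposition~\ref{P.bidegSurj} then conclude.

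The main obstacle is the smooth Schoenflies step: this theorem is classical for $n\in\{2,3\}$ (Alexander, Riemann--Schoenflies) and follows from Smale's $h$-cobordism theorem for $n\geq 5$, but it remains an open problem in dimension $n=4$---precisely why that case is excluded from the statement. The one other genuinely new technical point, namely that the bidegree formalism of Section~3 accommodates the mildly singular algebraic set $\ftri$, is purely homological and is handled by the codimension estimate above.
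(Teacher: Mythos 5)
Your proposal follows the paper's approach step by step: set up $\tri(n)\approx S^{n^2-1}$ and the flat locus $\ftri(n)$, define the boundary-compactified main map $\hat F_c\colon(\hat V,\partial\hat V)\to(\tri(n),\ftri(n))$, compute the bidegree for the round sphere, and transfer to a general Jordan $\calc^1$-sphere by an isotopy built from a $\calc^1$-smoothing and the smooth Schoenflies theorem, which is where the hypothesis $n\neq 4$ enters. You make one genuinely sharp observation that the paper glosses over: for $n\geq 3$ the zero set of $\delta=\det$ in $(\bbr^n)^n\setminus\{0\}$ is \emph{not} a smooth hypersurface, since $d(\det)$ vanishes on the (nonempty, for $n\geq 3$) stratum of nonzero matrices of rank $\leq n-2$; the paper's assertion that "$0$ is a regular value of $\delta$" is therefore incorrect for $n\geq 3$, and $\ftri(n)$ is a singular algebraic hypersurface, not a submanifold of $S^{n^2-1}$. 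Your codimension-$4$ estimate and the Alexander-duality computation $H_{n^2-1}(\tri,\ftri;\bbz_2)\approx\bbz_2\oplus\bbz_2$ are both correct. However, asserting that "the bidegree construction of Section~3 applies" needs more justification than the relative homology group alone: Lemma~\ref{L.Hnvanish} and Proposition~\ref{P.bidegSurj} are stated for a closed \emph{submanifold} $N$, so that $M_\pm$ are topological manifolds with boundary. With $N=\ftri(n)$ singular, $M_\pm$ are not manifolds with boundary, and the key vanishing $H_{n^2-1}(M_+-\{u\},N)=0$ and the excision isomorphisms in the proof of Proposition~\ref{P.degree-sing} have to be re-derived, e.g.\ using the high codimension of the singular stratum or a direct Čech/Alexander-duality argument; this is the place in your proposal that would actually need to be written out.

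A smaller issue: you claim the unique pre-image of a non-flat simplex on $S^{n-1}$ is a regular point "by Proposition~\ref{P.crit}," adding that the linear system "merely gains extra equations." That is an understatement: the scalars $\lambda_i$ become vectors in $T_{s_i}S^{n-1}$, $\dtc(t_i)$ becomes the linear map $T_{s_i}c$, and the geometric criterion "parallel or concurrent tangents" needs reformulating for tangent hyperplanes; you would then still have to verify that this criterion fails for every inscribed non-flat simplex in $S^{n-1}$. The paper avoids all of this by invoking Sard's theorem: since $F_c$ is $\calc^1$ and the source and target have equal dimension, regular values are dense, and combined with the fact that each non-flat simplex is inscribed in $S^{n-1}$ exactly once up to positive homothety and translation, Proposition~\ref{P.degree-sing} gives $\bdg(\hat F_c)=(1,1)$ without checking regularity at any particular point. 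You should substitute that argument for the appeal to Proposition~\ref{P.crit}.
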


The proof of \thref{mainThGen} requires the following smooth Schoenflies theorem
(for $n=4$, see \remref{R.schn=4}).

\begin{Proposition}[Smooth Schoenflies theorem]\label{P.schoe}
Let $\ga$ be a Jordan $\calc^\infty$-sphere in $\bbr^n$ with $n\neq 4$. Then $\ga$ bounds a smooth disk in $\bbr^n$.
\end{Proposition}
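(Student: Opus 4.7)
The plan is to prove this by dimensional cases, since each regime $n=2$, $n=3$, and $n\geq 5$ requires different machinery, and no single argument covers them all uniformly.

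For $n=2$, I would argue exactly as in \remref{R.schoe2}: the topological Schoenflies theorem guarantees that $\ga$ bounds a topological disk $D\subset\bbr^2$; the Riemann mapping theorem produces a conformal equivalence $\varphi\:{\rm int}(D^2)\to {\rm int}(D)$, and since $\ga$ is of class $\calc^\infty$ the extension of $\varphi$ to the boundary is also $\calc^\infty$ (e.g.\ by the Bell--Krantz theorem already cited). This yields the required smooth disk.

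For $n=3$, I would invoke the smooth version of Alexander's theorem: a smoothly embedded $2$-sphere in $\bbr^3$ bounds a smooth $3$-ball. This is classical (due to Alexander, with the smoothness refinement by Munkres, who showed that topological balls in low dimensions admit essentially unique smooth structures).

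For $n\geq 5$, I would proceed as follows. By the Jordan--Brouwer separation theorem, $\ga$ separates $\bbr^n$ into two components; let $D$ denote the closure of the bounded one. Since $\ga$ is a smooth submanifold, $D$ is a compact smooth manifold with boundary $\partial D=\ga$. The topological Schoenflies theorem of Brown--Mazur implies $D$ is homeomorphic to $D^n$, so in particular $D$ is contractible. Removing an open smooth disk $\Delta$ from the interior of $D$ produces a smooth $h$-cobordism between $\ga\approx S^{n-1}$ and $\partial\Delta\approx S^{n-1}$ (both boundary components are simply connected since $n-1\geq 4$). Since $n\geq 6$ and the boundaries are simply connected, Smale's $h$-cobordism theorem applies and yields a diffeomorphism of this cobordism with $S^{n-1}\times[0,1]$, whence $D$ is diffeomorphic to the standard smooth disk $D^n$. (For $n=5$, one uses the corresponding result of Smale's smooth generalized Poincar\'e theorem in dimension $5$, which likewise gives the conclusion.)

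The obstacle is of course $n=4$, which is why the hypothesis excludes it: the smooth Schoenflies conjecture in dimension $4$ is still open, and neither the Riemann mapping approach nor the $h$-cobordism machinery is available. Since the present statement is an assemblage of classical results, the ``proof'' is really a matter of organizing citations (Riemann/Bell--Krantz for $n=2$, Alexander--Munkres for $n=3$, Smale for $n\geq 5$) rather than producing new arguments.
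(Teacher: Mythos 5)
Your overall strategy matches the paper's: both decompose into the cases $n=2$, $n=3$, $n\geq 5$, use the Riemann mapping theorem with Bell--Krantz boundary regularity for $n=2$, and bootstrap the topological Schoenflies theorem into a smooth one via the $h$-cobordism theorem for large $n$. The main divergence is at $n=3$: you cite the smooth Alexander theorem (with Munkres's smoothing) as a black box, whereas the paper assembles that ingredient by hand. It glues a standard $D^3$ to the topological disk $\Delta$ bounded by $\ga$ (obtained from Brown's generalized Schoenflies theorem, after noting that smoothness gives a bicollar via the tubular neighborhood theorem), observes the resulting closed $3$-manifold is homeomorphic to $S^3$, applies Munkres's smoothing theorem to upgrade this to a diffeomorphism, and then invokes the disk theorem to put the glued $D^3$ in standard position, whence $\Delta\cong D^3$. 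The two routes rest on the same underlying results; the paper's version has the virtue of being the same "glue, smooth, isotope" scheme it can reuse in other dimensions, and of exposing exactly what is needed from Munkres.

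Two small points that should be tightened. First, when you invoke Brown--Mazur to get that $D$ is homeomorphic to $D^n$, you must record that $\ga$ is \emph{bicollared} (which follows from the tubular neighborhood theorem since $\ga$ is a smooth submanifold) --- this is a hypothesis of the generalized Schoenflies theorem, and the paper makes it explicit. Second, your parenthetical for $n=5$ is not quite right as stated. Removing a small disk from $D$ produces a $5$-dimensional $h$-cobordism, to which the $h$-cobordism theorem does not apply (it needs the cobordism to have dimension at least $6$), and Smale's generalized Poincar\'e theorem gives a \emph{homeomorphism} to $S^5$, not a diffeomorphism. What is actually needed is the Kervaire--Milnor result $\Theta_5=0$ (every homotopy $5$-sphere is diffeomorphic to $S^5$), combined with the gluing--plus--disk--theorem argument used for $n=3$: glue a standard $D^5$ to $D$, conclude the result is diffeomorphic to $S^5$, then isotope the glued disk into standard position. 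The paper sidesteps spelling this out by citing Milnor's \S 9, Propositions A and C, which together cover $n\geq 5$; but the phrase "Smale's smooth generalized Poincar\'e" conflates the topological statement with the stronger smooth one.
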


\begin{proof}
The classical case $n=2$ was recalled in \secref{S.pfmainth} (see (b) and \remref{R.schoe2}).
Let $c\:S^{n-1}\to\bbr^n$ be a smooth ($\calc^\infty$) embedding whose image is $\ga$.
By the tubular neighborhood theorem, $c$ extends to an embedding of $S^{n-1}\times[-1,1]$ into $\bbr^{n+1}$.
The generalized Schoenflies theorem then holds \cite[Theorem~5]{Brown}, implying that
$\ga$ bounds a topological disk $\Delta$. But $\Delta$ is a smooth manifold   
since $\ga$ is a smooth bicollared submanifold of $\bbr^n$. 
If $n\geq 5$, $\Delta$ is then diffeomorphic to $D^n$
as a consequence of the $h$-cobordism theorem \cite[\S\,9, Propositions~A and~C]{Milnor}.
When $n=3$, we form consider the smooth manifold $\Sigma$ obtained by gluing 
$D^n$ to $\Delta$ using the diffeomorphism $c$.
Then $\Sigma$ is a smooth closed $3$-manifold which is homeomorphic to $S^3$. 
By the smoothing theorem \cite[Theorem~6.3]{Munkres}, $\Sigma$ is then diffeomorphic to $S^3$.
By a smooth ambient isotopy of $\Sigma\approx S^3$, the disk $D$ may be put in standard position 
(see~(b) in \secref{S.pfmainth}), 
which implies that its complementary $\Delta$ is diffeomorphic to $D^3$. 
\end{proof}

\begin{Remark}\label{R.schn=4}\rm
The smooth Schoenflies theorem is not known for $n=4$. 
By the end of the above proof of \proref{P.schoe}, it would be implied by
the smooth Poincar\'e conjecture in dimension~$4$.  \mancqfd 
\end{Remark}

\begin{proof}[Proof of \thref{mainThGen}]
The proof follows that of the main theorem, so we just describe below
the necessary adaptations.
As in \secref{S.sptri},
we define the \dfn{space of $n$-simplexes} (not reduced to a single point) $\tri^0(n)$ 
as the smooth manifold
$$
\tri^0(n)=\{(z_0,z_1,\dots,z_n)\in(\bbr^n)^{n+1}-\Delta \} \, ,
$$ 
where $\Delta$ is the diagonal in $(\bbr^n)^{n+1}$. 
The diagonal action of the translation group $\calg_1(n)\approx \bbr^n$ on $\tri^0(n)$ is smooth and proper,
with quotient $\tri^1(n)$ diffeomorphic to $\bbr^{n^2}-\{0\}$. A further quotient by the homotheties with positive ratio
provides $\tri(n) \approx S^{n^2-1}$.
% \signet
% Let $\calg_1(n)\approx \bbr^n$ be the group of translations of $\bbr^n$.
% Its diagonal action on $(\bbr^n)^{n+1}$  is smooth and proper and
% $\tri^1(n)=\tri^0(n)/\calg_1(n)$ is diffeomorphic to $\bbr^{n^2}-\{0\}$.
% Let $\calg_2(n)$ be the group of homothety of $\bbr^n$ with a positive ratio, isomorphic to $\bbr_{>0}$.
% Thus, $\tri(n) = \tri^1/\calg_2(n) \approx S^{n^2-1}$. 
% \signet
The spaces of flat simplexes $\ftri^0(n)$, $\ftri^1(n)$ and
$\ftri(n)$ are defined accordingly. Note that
$\ftri^1(n) = \delta^\mun(0)$, where $\delta\:\tri^1(n)\to\bbr$ is the smooth map
$$
\delta(z_1,\dots,z_n)= \det(z_1,\dots,z_n)
$$
the $z_i$'s being considered as column vectors of an $(n\times n)$-matrix. As $0$ is a regular value of $\delta$,
we get, as in \proref{P.tritrifl}, a diffeomorphism of manifold pairs
$$
(\tri(n),\ftri(n)) \approx (S^{n^2-1},\calt(n))
$$ 
where $\calt(n)$ is a codimension $1$ submanifold separating $S^{n^2-1}$ into two components. 

We now study the main map analogous to that of \secref{S.mainmap}.
Let $c\:S^{n-1}\to\bbr^n$ be a $\calc^1$-embedding parameterizing a Jordan sphere $\ga$.
Let 
$$
V=(S^{(n-1})^{(n+1)}-\Delta
$$ 
and consider the $\calc^1$-map $F^0\:V\to \tri^0(n)$ given by
$$
F^0(s_0,s_1,\dots,,s_n) = (c(s_0),c(s_1),\dots,c(s_n)) \, .
$$
The compositions with the quotient maps onto $\tri^1(n)$  and $\tri(n)$ give maps $F^1\:V\to \tri^1(n)$ and $F\:V\to\tri(n)$. 
As in \secref{S.compa}, we perform a boundary-compactification $\hat V$ of $V$ and extend the map $F$ 
into continuous map of pairs
\beq{gdefhatF}
\hat F \: (\hat V,\partial\hat V) \to (\tri,\ftri) \, .
\eeq
The definition of $\hat V$ goes as follows. 
The restriction of the tangent bundle $T((S^{n-1})^{n+1})$ over $\Delta$ admits the following description
$$
T((S^{n-1})^{n+1})_{|\Delta} = \{(s,w_0,\dots,w_n)\mid s\in S^{n-1}  \ , \ 
w_i\in\bbr^n  \hbox{ and } \langle w_i,s\rangle=0\}  \, .
$$
This bundle over $\Delta$ splits into a Whitney sum $T((S^{n-1})^{n+1})_{|\Delta} \approx T\Delta \oplus \caln\Delta$
of the tangent bundle $T\Delta$ (where all the $w_i$'s are equal) and the supplementary bundle
$$
\caln\Delta = \{(s,w_1,\dots,w_n)\mid s\in S^{n-1} \ , \ w_i\in\bbr^n \hbox{ and } \langle w_i,s\rangle=0\}  \, .
$$
Call $\caln^1\Delta$ the unitary bundle of $\caln\Delta$ (where $\sum_{i=1}^n|w_i|^2=1$).
For $s\in S^{n-1}$, consider the exponential map $\exp_s\:T_sS^{n-1}\to S^{n-1}$ for the standard metric on $S^{n-1}$.
The map 
$$
h\:\caln^1\Delta \times [0,1)\to (S^{n-1})^{n+1}
$$
given by 
$$
h((s,w_1,\dots,w_n),\lambda) = (s,\exp_s(\lambda w_1),\dots,\exp_s(\lambda w_n))
$$
parameterizes an open tubular neighborhoods of $\Delta$ in $(S^{n-1})^{n+1}$. The space $\hat V$ is 
the quotient of 
$\hat V = (\caln^1\Delta \times [0,1)) \,\,\dot\cup \,\, W$
% $$
% \hat V = \Big(\big\{\caln^1\Delta \times [0,1)\big\} \,\,\dot\cup \,\, W\Big) \, \Big/  \sim  \, ,
% $$
by the equivalence relation 
$$
(s,w_1,\dots,w_n),\lambda) \sim  h(s,w_1,\dots,w_n),\lambda) \ \hbox{ when } \ \lambda\neq 0  \, .
$$
Thus, $\hat V$ is a compact manifold with boundary $\partial\hat V\approx \caln^1\Delta$.
As 
$
\exp_s(\lambda w_i) - s = \exp_s(\lambda w_i) - \exp_s(0) = \lambda w_i + \pcirc(\lambda w_i) %\, ,
$,
one proves, as for \lemref{L.hatF}, that $F$ admits the continuous extension $\hat F$ of~\eqref{gdefhatF},
such that $\hat F \pcirc h (s,w_1,\dots,w_n),0)$ is represented by the flat simplex with vertices $T_sc(w_1),\dots,T_sc(w_n)$,
contained in the affine hyperplane tangent to $\ga$ at $c(s)$.

The proof of \thref{mainThGen} now proceeds as in \secref{S.pfmainth}. 
As $\dim \hat V= \dim \tri(n)$, the map $\hat F_c=F$ has a bidegree and \thref{mainThGen} comes from 
the equality $\bdg(\hat F_c)=(1,1)$, using \proref{P.bidegSurj}.
The case where $c(S^{n-1})=S^{n-1}$ is treated first, using that, on $S^{n-1}$,
every non-flat $n$-simplex occurs exactly once; by the Sard theorem, there must be regular values of $\hat F_c$
almost everywhere in $\tri(n)-\tri_{fl}(n)$, which proves that $\bdg(\hat F_c)=(1,1)$ by \proref{P.degree-sing}.                                                                                                                  
Point~(a) of \secref{S.pfmainth} is valid in any dimension, permitting us to assume that $c$ is of class $\calc^\infty$. 
By the smooth Schoenflies theorem (see \proref{P.schoe}),
there exists a $\calc^\infty$-embedding $\Theta\:D^n\to\bbr^n$ such that
$\Theta(S^{n-1})=\ga$. Replacing $c$ by $\Theta$ if necessary, we can assume that $c$ extends to an embedding of $D^n$.
Point~(b) of \secref{S.pfmainth} proves that such an embedding is isotopic to a diffeomorphism of $D^n$ and thus
$\bdg(\hat F_c)=(1,1)$. (Contrarily to the case $n=2$, the embedding $c$ itself may not extend to $D^n$,
since there are diffeomorphisms of $S^{n-1}$ which do not extend to diffeomorphisms of $D^n$). 
\end{proof}

%BIBLIOGRAPHY

\sk{4}\noindent {\small
Jean-Claude HAUSMANN\\
Math\'ematiques -- Universit\'e\\
B.P. 64, 
CH--1211 Geneva 4, Switzerland\\
jean-Claude.hausmann@unige.ch}

\end{document}